\def \R{{\mathbb R}}
\def \1{{\mathbb 1}}
\newtheorem{Exemp}{Examples}
\newtheorem{Thm}{Theorem}
\newtheorem{Prop}{Proposition}
\newtheorem{Def}{Definition}
\newtheorem{Rem}{Remark}
\newtheorem{Lem}{Lemma}
\newtheorem{Def Nota}{Definitions and notations} 
\newtheorem{Cor}{Corollary}
\font\ninerm=cmr9
\long\outer\def\abstract#1{\bigskip\vbox{\noindent\ninerm
\baselineskip=10pt#1}\nobreak\bigskip}
\def\exo#1{\advance\numero by 1\bigskip
{\noindent\tenbf #1\the\numero. }}
\def\frac#1#2{{#1\over #2}}
\title{On representations of isometric isomorphisms between some monoid of functions.}   
\author{Mohammed Bachir}
\begin{document}
\maketitle
\begin{center} {\it Laboratoire SAMM 4543, Universit\'e Paris 1 Panth\'eon-Sorbonne, Centre P.M.F. 90 rue Tolbiac 75634 Paris cedex 13}
\end{center}
\begin{center} 
{\it Email : Mohammed.Bachir@univ-paris1.fr}
\end{center}
\noindent\textbf{Abstract.} We prove that each isometric isomorphism, between the monoids of all nonegative $1$-Lipschitz maps defined on invariant metric groups and equiped with the inf-convolution law, is given canonically from an isometric isomorphism between their groups of units.  

\vskip5mm

\noindent{\bf Keyword, phrase:} Inf-convolution; $1$-Lipschitz; isometries; group and monoid structure; Banach-Stone theorem.\\
{\bf 2010 Mathematics Subject:} 46T99; 26E99; 20M32; 47B33.\\
\section*{Introduction.}
Given a metric space $(X,d)$, we denote by $Lip^1_+(X)$ the set of all nonnegative $1$-Lipschitz maps on $X$ equipped with the metric
$$\rho(f,g):= \sup_{x\in X} \frac{|f(x)-g(x)|}{1+|f(x)-g(x)|}, \hspace{2mm} \forall f,g\in Lip^1_+(X).$$

If $X$ is a group and $f, g: X\longrightarrow \R$ are two functions, the inf-convolution of $f$ and $g$ is defined by the following formula
\begin{eqnarray*}  \label{Al}
(f\oplus g) (x) &:=& \inf_{y,z\in X/ yz=x}\left\{f(y)+g(z)\right\}; \hspace{3mm}\forall x\in X.
\end{eqnarray*}

We recall the following definition.
\begin{Def} \label{invgroup} 
Let $(X,d)$ be a metric group. We say that $(X,d)$ is invariant if and only if, 
$$d(xy,xz)=d(yx,zx)=d(y,z), \hspace{2mm} \forall x, y, z \in X.$$
If moreover $X$ is complete for the metric $d$, then we say that $(X,d)$ is an invariant complete metric group.
\end{Def}
Examples of invariant metric groups are given in \cite{Ba1}. In all the paper $(X,d)$ and $(Y,d')$ will be assumed to be invariant metric groups having respectively $e$ and $e'$ as identity element and $(\overline{X},\overline{d})$ (resp.  $(\overline{Y},\overline{d'})$) denotes the group completion of $(X,d)$ (resp.  of $(Y,d')$).

\vskip5mm
Recently, we established in \cite{Ba1} that the set $(Lip^1_+(X),\oplus)$ enjoys a monoid structure, having the map $\delta_e: x\mapsto d(x,e)$ as identity element and that the group completion $(\overline{X},\overline{d})$ of $(X,d)$ is completely determined by the metric monoid structure of $(Lip^1_+(X),\oplus, \rho)$. In other words, $(Lip^1_+(X),\oplus, \rho)$ and $(Lip^1_+(Y),\oplus, \rho)$ are isometrically isomorphic as monoids if and only if, $(\overline{X},\overline{d})$ and $(\overline{Y},\overline{d'})$ are isometrically isomorphic as groups. Also, we proved that the group of all invertible elements of $Lip^1_+(X)$ coincides, up to isometric isomorphism, with the group completion $\overline{X}$ (See [Theorem 1., \cite{Ba1}] and [Theorem 2., \cite{Ba1}]). The main result of \cite{Ba1}, gives a Banach-Stone type theorem.

\vskip5mm
The representations of isometries between Banach spaces of Lipschitz maps defined on metric spaces and equipped with their natural norms, was considered by several authors \cite{GJ}, \cite{W}, \cite{JV}. In general, such isometries are given, under some conditions, canonically as a composition operators. Other Banach-Stone type theorems are also given for unital vector lattices structure \cite{GJ1}.
\vskip5mm
In this note, we provide the following result which gives complete representations of isometric isomorphisms for the monoid structure between $Lip^1_+(X)$ and $Lip^1_+(Y)$. Our result complement those given in \cite{Ba1} and \cite{Ba}.

\begin{Thm} \label{Thm0}  Let $(X,d)$ and $(Y,d')$ be two invariant metric groups. Let $\Phi$ be a map from $(Lip^1_+(X),\oplus, \rho)$ into $(Lip^1_+(Y),\oplus, \rho)$. Then the following assertions are equivalent.

$(1)$ $\Phi$  is an isometric isomorphism of monoids

$(2)$ there exists an isometric isomorphism of groups $T: (\overline{X},\overline{d})\longrightarrow (\overline{Y},\overline{d'})$ such that $\Phi (f)=(\overline{f}\circ T^{-1})_{|Y}$ for all 
$f\in Lip^1_+(X)$, where $\overline{f}$ denotes the unique $1$-Lipschitz extenstion of $f$ to $\overline{X}$ and $(\overline{f}\circ T^{-1})_{|Y}$ denotes the restriction of $\overline{f}\circ T^{-1}$ to $Y$.
\end{Thm}

\vskip5mm

If $A$ (resp. X) is a metric monoid (resp. a metric group), by $Is_m(A)$ (resp. $Is_g(X)$) we denote the group of all isometric automorphism of the monoid $A$ (resp. of the group $X$). 
The symbol ''$\simeq$'' means "{\it isomorphic as groups}". An immediate consequence of Theorem \ref{Thm0} is given in the following corollary.

\begin{Cor} \label{Corintro} Let $(X,d)$ be an invariant metric group. Then,
$$Is_m(Lip^1_+(X)) \simeq Is_g(\overline{X}).$$
\end{Cor}
As application of the results of this note, we discover new semigroups law on $\R^n$ (different from the usual operation $+$) having some nice properties. We treat this question in Example \ref{Exemp1} at Section \ref{S4}, where it is shown that each finite group structure $(G, \cdot)$, extend canonically to a semigroup structure on $\R^{n}$ (where $n$ is the cardinal of $G$). In other words, there always exists a semigroup law $\star_G$ on $\R^{n}$ and an injective group morphism $i$ from $(G, \cdot)$ into $(\R^{n},\star_G)$ such that the maximal subgroup of $(\R^{n},\star_G)$ having $e:=(0,1,1,...,1)$ as identity element is isomorphic to the group $G\times \R$. The idea is simply based on the use of the results of this paper and the identification between $(\R^{n},\star_G)$ and $(Lip(G),\oplus)$ where $G$ is equiped with the discrete metric, and $Lip(G)$ denotes the space of all Lipschitz map on $G$.

\vskip5mm
This note is organized as follows. Section \ref{S0} concern the proof of Theorem \ref{Thm0} and is divided on two subsections: in Subsection \ref{S1} we prove some useful lemmas and in Subsection \ref{S2}, we give the proof of the main result Theorem \ref{Thm0}. In Section \ref{S3}, we give some properties of the group of invertible elements for the inf-convolution law. In section \ref{S4}, we review the results of this paper in the algebraic case. 

\section{Proof of the main result.} \label{S0}
\subsection{Preliminary results} \label{S1}
We follow the notation of \cite{Ba1}. For each fixed point $x\in X$, the map $\delta_{x}$ is defined from $X$ into $\R$ as follows
\begin{eqnarray} \delta_{x}: X &\rightarrow& \R \nonumber\\ 
                                         z &\mapsto&  d(z,x)=d(zx^{-1},e).\nonumber
\end{eqnarray} 

We define the subset $\mathcal{G}(X)$  of $Lip^1_+(X)$ as follows 
$$\mathcal{G}(X):=\left\{\delta_{x}: x\in X\right\}\subset Lip^1_+(X).$$

We consider the operator $\gamma_X$ defined as follows
\begin{eqnarray} \gamma_X : X & \rightarrow & \mathcal{G}(X) \nonumber \\
x & \mapsto & \delta_{x} \nonumber 
\end{eqnarray}

We are going to prove some lemmas.
\begin{Lem} \label{lem3} Let $(X,d)$ and $(Y,d')$ be two invariant complete metric groups having respectively $e$ and $e'$ as identity elements. Let $\Phi$ be a map from $(Lip^1_+(X),\oplus, \rho)$ onto $(Lip^1_+(Y),\oplus, \rho)$ which is an isometric isomorphism of monoids. Then, the following asserions holds.

$(1)$ for all $f\in Lip^1_+(X)$, $\inf_{Y} \Phi(f)=\inf_{X} f$  and for all $r\in \R^+$, $\Phi(r)=r$.

$(2)$ there exists an isometric isomorphism of groups $T : (X,d)\longrightarrow (Y,d')$ such that $\Phi(r+\delta_x)=r+\delta_{T(x)}=r+ \delta_x \circ T^{-1}$, for all $r\in \R^+$ and for all $x\in X$.

$(3)$ $\Phi(f+r)=\Phi(f)+ r$, for all $f\in Lip^1_+(X)$ and for all $r\in \R^+$.
\end{Lem}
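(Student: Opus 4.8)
The plan is to exploit two elementary identities for the law $\oplus$ and then reduce everything to a single reverse inequality. For every $a\in Lip^1_+(X)$ and $x\in X$ one has $(a\oplus\delta_x)(w)=\inf_y\{a(y)+d(y^{-1}w,x)\}=\inf_y\{a(y)+d(y,wx^{-1})\}=a(wx^{-1})$, the last step using invariance and the $1$-Lipschitz property of $a$ (the infimum is attained at $y=wx^{-1}$). In particular $\delta_{x'}\oplus\delta_x=\delta_{x'x}$, so $\mathcal G(X)$ is a group isomorphic to $X$. Since $\Phi$ is a monoid isomorphism it carries the group of units of $Lip^1_+(X)$ onto that of $Lip^1_+(Y)$; invoking the identification of the units recalled in the introduction (completeness gives $\overline X=X$), this produces a bijection $T:X\to Y$ with $\Phi(\delta_x)=\delta_{T(x)}$. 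It is a group isomorphism, sends $e$ to $e'$ since $\Phi(\delta_e)=\delta_{e'}$, and is an isometry because on $\mathcal G$ the metric $\rho$ equals $d/(1+d)$ and $\Phi$ preserves $\rho$. This is the backbone of assertion $(2)$.

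For $(1)$ I would characterize the constants intrinsically. By the displayed identity, $a\oplus\delta_x=a$ for all $x$ iff $a(wx^{-1})=a(w)$ for all $w,x$, i.e. iff $a$ is constant. Hence the constants form the set $\{a:\ a\oplus u=a\ \text{for every unit }u\}$, a property preserved by $\Phi$; so $\Phi$ maps constants to constants, and by symmetry bijectively. On the constants $\oplus$ reduces to $+$ with identity the constant $0$, so $\Phi$ restricts to a semigroup automorphism of $(\R^+,+)$ fixing $0$; since $\rho$ restricted to the constants is $|r-s|/(1+|r-s|)$, the isometry forces $|\Phi(r)-\Phi(0)|=r$, whence $\Phi(r)=r$. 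Finally $0\oplus f$ is the constant $\inf_X f$, so $\inf_Y\Phi(f)=\Phi(0)\oplus\Phi(f)=\Phi(0\oplus f)=\Phi(\inf_X f)=\inf_X f$.

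The heart is $(3)$, from which $(2)$ follows immediately: $\Phi(r+\delta_x)=\Phi(\delta_x)+r=\delta_{T(x)}+r=r+\delta_x\circ T^{-1}$, where $\delta_{T(x)}=\delta_x\circ T^{-1}$ since $T$ is an isometry. The crucial remark is the second identity $(\delta_e+r)\oplus f=f+r$, valid for every $f$ because $((\delta_e+r)\oplus f)(w)=r+(\delta_e\oplus f)(w)=r+f(w)$. Consequently $\Phi(f+r)=\Phi(\delta_e+r)\oplus\Phi(f)$, and $(3)$ is equivalent to $h_r:=\Phi(\delta_e+r)=\delta_{e'}+r$. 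Part $(1)$ and the isometry yield the easy half: $\inf_Y h_r=\inf_X(\delta_e+r)=r$ (so $h_r\ge r$), and $\rho(h_r,\delta_{e'})=r/(1+r)$ gives $\sup_Y|h_r-\delta_{e'}|=r$, whence $h_r\le\delta_{e'}+r$ and $h_r(e')=r$.

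The main obstacle is the reverse inequality $h_r\ge\delta_{e'}+r$, equivalently the fact that $\Phi$ respects the pointwise order. I would obtain it by a sandwich with $\Phi^{-1}$: applying the easy half to $\Phi^{-1}$ gives $k_r:=\Phi^{-1}(\delta_{e'}+r)\le\delta_e+r$ with $\Phi(k_r)=\delta_{e'}+r$, so once $\Phi$ is known to be monotone one concludes $\delta_{e'}+r=\Phi(k_r)\le\Phi(\delta_e+r)=h_r$, closing the argument. Monotonicity of $\Phi$ is the delicate point, since the purely metric and infimum data carried by $(\rho,\oplus)$ are a priori consistent with a whole band of candidates for $h_r$; I expect to extract it from a finer analysis of the points at which the suprema defining $\rho(h_r,\delta_{y_0})=\bigl(r+d'(y_0,e')\bigr)/\bigl(1+r+d'(y_0,e')\bigr)$ are attained, as $y_0$ ranges over $Y$, combined with the representation $f=\inf_{x}(\delta_x+f(x))$ of any $1$-Lipschitz map as an infimum of the elementary functions handled in $(2)$ and with the completeness of the groups.
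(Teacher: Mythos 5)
Your construction of $T$ via the group of units, your proof of part $(1)$, your reduction of parts $(2)$ and $(3)$ to the single identity $\Phi(r+\delta_e)=r+\delta_{e'}$, and your ``easy half'' $h_r\le\delta_{e'}+r$ are all correct and close in spirit to the paper. But the reverse inequality $h_r\ge\delta_{e'}+r$ --- which you yourself call the heart of the matter --- is not proven: you reduce it to the pointwise monotonicity of $\Phi$ and then offer only a hope of extracting monotonicity ``from a finer analysis'' of where the suprema defining $\rho$ are attained. That is a genuine gap, and it is precisely the nontrivial content of the lemma. It is also a dangerous direction: in the paper, order-preservation of $\Phi$ (Lemma \ref{lem2}) is deduced \emph{from} Lemma \ref{lem3}$(3)$ (its proof uses $\Phi(f+r)=\Phi(f)+r$ in an essential way), so proving $(3)$ from monotonicity would be circular unless you supply an independent proof of monotonicity, which you do not.

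The paper fills this gap by a purely algebraic argument with the units, with no order considerations at all. Set $g:=\Phi(r+\delta_e)$. Part $(1)$ gives $\inf_Y g=r$, so $g-r\in Lip^1_+(Y)$, and since $r$ is a constant and $\delta_{e'}$ is the identity of the monoid, $g=(g-r)\oplus(r+\delta_{e'})=(r+\delta_{e'})\oplus(g-r)$. Applying $\Phi^{-1}$ and subtracting the constant $r$ (part $(1)$ applied to $\Phi^{-1}$ gives $\Phi^{-1}(r+\delta_{e'})-r\ge 0$ as well) produces a two-sided factorization of the identity element:
$$\delta_e=\Phi^{-1}(g-r)\oplus\bigl(\Phi^{-1}(r+\delta_{e'})-r\bigr)=\bigl(\Phi^{-1}(r+\delta_{e'})-r\bigr)\oplus\Phi^{-1}(g-r).$$
Hence both factors are invertible in $Lip^1_+(X)$, so by [Theorem 1, \cite{Ba1}] they lie in $\mathcal{G}(X)$: there exist $\alpha(r),\beta(r)\in X$ with $\alpha(r)\beta(r)=e$, $\Phi^{-1}(r+\delta_{e'})-r=\delta_{\alpha(r)}$ and $\Phi^{-1}(g-r)=\delta_{\beta(r)}$. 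Finally the isometry, via $\rho(r+\delta_{\alpha(r)},\delta_e)=\rho(\Phi(r+\delta_{\alpha(r)}),\Phi(\delta_e))=\rho(r+\delta_{e'},\delta_{e'})=\frac{r}{1+r}$, tested at the single point $t=e$ of the supremum defining $\rho$, forces $d(\alpha(r),e)\le 0$, hence $\alpha(r)=\beta(r)=e$ and $g=r+\delta_{e'}$ --- exactly your missing inequality. This factorization-of-the-identity trick is the ingredient your proposal lacks; with it, your reduction scheme goes through and monotonicity is never needed.
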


\begin{proof}

Since an isomorphism of monoids, sends the group of unit onto the group of unit, then using [Theorem 1., \cite{Ba1}], the restriction $T_1:=\Phi_{|\mathcal{G}(X)}$ is an isometric group isomorphism from $\mathcal{G}(X)$ onto $\mathcal{G}(Y)$. On the other hand, the map $\gamma_X : X\longrightarrow \mathcal{G}(X)$ gives an isometric group isomorphism by [Lemma 2., \cite{Ba1}]. Thus, the map $T:=\gamma^{-1}_Y \circ T_1 \circ \gamma_X$, gives an isometric group isomorphism from $X$ onto $Y$ and we have that for all $x\in X$, $\Phi(\delta_x):=T_1(\delta_x)=T_1 \circ \gamma_X(x)=\gamma_Y\circ T(x)=\delta_{T(x)}=\delta_x \circ T^{-1}$.

\vskip5mm 
We prove the part $(1)$. 
Note that $f\oplus 0=0\oplus f=\inf_{x\in X} f$ for all $f\in Lip^1_+(X)$. First, we prove that $\Phi(0)=0$. Indeed, for all $x\in X$, we have that $0\oplus \delta_x=0$. Thus, $\Phi(0)=\Phi(0)\oplus \Phi(\delta_x)=\Phi(0)\oplus\delta_{Tx}$. Using the surjectivity of $T$, we obtain that for all $y\in Y$, we have that $\Phi(0)=\Phi(0)\oplus \delta_y$. So, using the definition of the if-convolution, we get $\Phi(0)(z)=\inf_{ts=z}\lbrace \Phi(0)(t)+\delta_{y}(s) \rbrace \leq \Phi(0)(zy^{-1})$ for all $y,z\in Y$. By taking the infinimum over $y\in Y$, we obtain that $\Phi(0)(z)\leq \inf_Y \Phi(0)$, for all $z\in Y$. It follows that $\Phi(0)=\inf_Y \Phi(0)$ is a constant function. Now, since $\Phi(0)$ is a constant function, we have $2\Phi(0)= \Phi(0)\oplus \Phi(0)=\Phi(0\oplus 0)=\Phi(0)$, it follows that $\Phi(0)=0$. Finaly, we prove that $\Phi(r)=r$  for all $r\in \R^+$. Indeed, since $r\oplus 0=r$ and $\Phi(0)=0$, it follows that $\Phi(r)=\Phi(r)\oplus 0=\inf_Y \Phi(r)$, which implies that $\Phi(r)$ is a constant function. Using the fat that $\Phi$ is an isometry, we get that $\rho(\Phi(r),0)=\rho(\Phi(r),\Phi(0))=\rho(r,0)$. In other word, $\frac{\Phi(r)}{1+\Phi(r)} =\frac{r}{1+r}$, which implies that $\Phi(r)=r$. Now, we have $\inf_{y\in Y} \Phi(f)=\Phi(f)\oplus 0=\Phi(f)\oplus \Phi(0)=\Phi(f\oplus 0)=\Phi(\inf_{x\in X} f)=\inf_{x\in X} f$.

\vskip5mm
We prove the part $(2)$. 
Let $r\in \R^+$ and set $g=\Phi(r+\delta_e)\in Lip^1_+(Y)$. We first prove that $g=r+\delta_{e'}$. Using the part $(1)$, we have that $r=\Phi(r)=\Phi(\inf_{x\in X}(r+ \delta_e))=\inf_{y\in Y} \Phi(r+\delta_e)\leq \Phi(r+\delta_e)=g$. Thus $g-r\geq 0$ and so $g-r \in Lip^1_+(Y)$. On the other hand, since $Lip^1_+(Y)$ is a monoid having $\delta_{e'}$ as identity element, we have that $g=(g-r)\oplus(r+\delta_{e'})=(r+\delta_{e'})\oplus (g-r)$. Now, since $\Phi^{-1}$ is a monoid morphism, we get that 
\begin{eqnarray*}
r+\delta_e &=& \Phi^{-1}(g)\\
           &=&\Phi^{-1}(g-r)\oplus \Phi^{-1}(r+\delta_{e'})=\Phi^{-1}(r+\delta_{e'})\oplus \Phi^{-1}(g-r).
\end{eqnarray*} 
As above we prove that $\Phi^{-1}(r+\delta_{e'})-r\geq 0$. Thus, $\Phi^{-1}(r+\delta_{e'})-r\in Lip^1_+(X)$. Since $r$ is a constant function, the above equality is equivalent to the following one 
\begin{eqnarray*}
\delta_e &=&\Phi^{-1}(g-r)\oplus (\Phi^{-1}(r+\delta_{e'})-r)=(\Phi^{-1}(r+\delta_{e'})-r)\oplus \Phi^{-1}(g-r).
\end{eqnarray*}
Since from [Theorem 1, \cite{Ba1}], the invertible element in $Lip^1_+(X)$ are exactely the element of $\mathcal{G}(X)$ and since $\mathcal{G}(X)$ is a group by [Lemma 2, \cite{Ba1}], we deduce from the above equality that $\Phi^{-1}(r+\delta_{e'})-r \in \mathcal{G}(X)$ and $\Phi^{-1}(g-r) \in \mathcal{G}(X)$ and there exists $\alpha(r), \beta(r) \in X$ such that  
\begin{equation*}
 \left\{
\begin{array}{cl}
&e = \alpha(r)\beta(r)\\
&\Phi^{-1}(r+\delta_{e'})-r = \delta_{\alpha(r)}\\
&\Phi^{-1}(g-r) = \delta_{\beta(r)}
\end{array}
\right.
\end{equation*}
This implies that 
\begin{equation} \label{eq0}
 \left\{
\begin{array}{cl}
&e = \alpha(r)\beta(r)\\
&\Phi(r+\delta_{\alpha(r)})=r+ \delta_{e'}\\   
& g= r+\Phi(\delta_{\beta(r)}) =r+\delta_{T(\beta(r))}
\end{array}
\right.
\end{equation}
We need to prove that $\alpha(r)=\beta(r)=e$ for all $r\in \R^+$. Indeed, since $\Phi$ is an isometry, we have that 
\begin{eqnarray*}
\rho(\Phi(r+\delta_{\alpha(r)}),\Phi(\delta_e))=\rho(r+\delta_{\alpha(r)},\delta_e).
\end{eqnarray*}
Using the above formula, the second equations in (\ref{eq0}) and the definition of the metric $\rho$ with the fact that $\Phi(\delta_e)=\delta_{e'}$, we get

\begin{eqnarray*}
\frac{r}{1+r}&=&\rho(r+ \delta_{e'},\delta_{e'})\\
             &=&\rho(\Phi(r+\delta_{\alpha(r)}),\Phi(\delta_e))\\ 
             &=&\rho(r+\delta_{\alpha(r)},\delta_e)\\
             &=&\sup_{t\in X}\frac{|r+\delta_{\alpha(r)}(t)-\delta_e(t)|}{1+|r+\delta_{\alpha(r)}(t)-\delta_e(t)|}.\\
             &\geq& \frac{r+\delta_{\alpha(r)}(e)}{1+r+\delta_{\alpha(r)}(e)}
\end{eqnarray*}
A simple computation of the above inequality, gives that $\delta_{\alpha(r)}(e)\leq 0$ i.e. $d(\alpha(r),e)\leq 0$. In other word, we have that $\alpha(r)=e$ for all $r\in \R^{+}$. On the other hand, using the first equation of (\ref{eq0}), we get that $\beta(r)=e$ for all $r\in \R^{+}$. It follows from the equation (\ref{eq0}) that $\Phi(r+\delta_e)=r+\delta_{e'}$ for all $r\in \R^+$. Now, it is easy to see that for all $r\in R^+$ and all $x\in X$ we have  
\begin{eqnarray*}
r+\delta_x=(r+\delta_e)\oplus \delta_x.
\end{eqnarray*}
It follows that 
\begin{eqnarray*}
\Phi(r+\delta_x)&=&\Phi(r+\delta_e)\oplus \Phi(\delta_x)\\
                &=& (r+\delta_{e'})\oplus \delta_{T(x)}\\
                &=& r+\delta_{T(x)}
\end{eqnarray*}
Since $T$ is isometric, we obtain that $\Phi(r+\delta_x)=r+\delta_{T(x)}=r+\delta_{x}\circ T^{-1}$. 

\vskip5mm
Now, we prove the part $(3)$. Let $f \in Lip^1_+(X)$ and $r\in \R^+$. It is easy to see that $f+r=f\oplus (r+\delta_e)$. So, using the part $(2)$, we obtain that $\Phi(f+r)=\Phi(f)\oplus \Phi(r+\delta_e)=\Phi(f)\oplus (r+\delta_{e'})=\Phi(f)+r$.
\vskip5mm

\end{proof}

\begin{Lem} \label{lem1.2} Let $(X,d)$ be an invariant metric group. Let $f\in Lip^1_+(X)$. Then, for all $x \in X$ and all positive real number $a$ such that $a\geq f(x)$, we have that
$$f(x)= (\inf(\delta_e,a)\oplus f)(x).$$
\end{Lem}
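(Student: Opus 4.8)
The plan is to unfold the definition of the inf-convolution and then establish the two inequalities separately. Writing out the law, and noting that $\inf(\delta_e,a)$ is the pointwise minimum of the map $z\mapsto d(z,e)$ and the constant function $a$, we have
$$(\inf(\delta_e,a)\oplus f)(x)=\inf_{yz=x}\Big\{\min\big(d(y,e),a\big)+f(z)\Big\}.$$

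For the inequality ''$\leq$'', I would simply choose the admissible decomposition $y=e$, $z=x$ (valid since $ex=x$); the corresponding term equals $\min(d(e,e),a)+f(x)=0+f(x)=f(x)$, so the infimum is at most $f(x)$.

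For the reverse inequality ''$\geq$'', the goal is to show that \emph{every} admissible term is at least $f(x)$, and I would argue by cases on the value of the minimum. If $\min(d(y,e),a)=a$, then, since $f\geq 0$ and $a\geq f(x)$ by hypothesis, the term satisfies $a+f(z)\geq a\geq f(x)$. If instead $\min(d(y,e),a)=d(y,e)$, the decisive step is the identity $d(x,z)=d(y,e)$: indeed, by right-invariance of the metric, $d(yz,ez)=d(y,e)$, and since $ez=z$ and $yz=x$ this reads $d(x,z)=d(y,e)$. Combining this with the $1$-Lipschitz property of $f$ gives $f(x)\leq f(z)+d(x,z)=f(z)+d(y,e)$, so the term is again at least $f(x)$.

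Taking the infimum over all admissible decompositions yields ''$\geq$'', and together with ''$\leq$'' this gives the claimed equality. The only genuine subtlety is converting the Lipschitz estimate into the group setting through the invariance of $d$; once the identity $d(x,z)=d(y,e)$ is recognized, the remainder is a routine case distinction, so I anticipate no serious obstacle.
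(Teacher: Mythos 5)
Your proof is correct and follows essentially the same route as the paper's: both unfold the inf-convolution, use the invariance of $d$ to identify $\delta_e(y)$ with $d(x,z)$ (the paper writes this as $d(xt^{-1},e)=d(t,x)$), and split into the case where the minimum is $a$ (handled by $f\geq 0$ and $a\geq f(x)$) and the case where it is the distance (handled by the $1$-Lipschitz estimate). The only cosmetic difference is that the paper evaluates the infimum exactly by partitioning the domain into $d(t,x)\leq a$ and $d(t,x)\geq a$, whereas you establish the two inequalities separately; the ingredients are identical.
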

\begin{proof} Let $x\in X$ and $a \geq 0$ such that $f(x)\leq a$. We have that 
\begin{eqnarray*}
(\inf(\delta_e,a) \oplus f)(x) = \inf_{t \in X} \lbrace \inf (d(xt^{-1},e),a) + f(t) \rbrace\\            
                               = \inf_{t \in X} \lbrace f(t) + \inf (d(t,x),a)\rbrace\\
                               = \min \lbrace \inf_{t\in X/ d(t,x)\leq a} \lbrace f(t) + \inf (d(t,x),a)\rbrace ; \\
                                \inf_{t\in X/ d(t,x)\geq a} \lbrace f(t) + \inf (d(t,x), a)\rbrace \rbrace\\
                               = \min  \lbrace \inf_{t/ d(t,x)\leq a} \lbrace f(t) + d(t,x)\rbrace, \inf_{t/ d(t,x)\geq a} \lbrace f(t) + a\rbrace \rbrace.
\end{eqnarray*}

Since $f$ is $1$-Lipschitz we have that $ f(x)=\inf_{t/ d(t,x)\leq a} \lbrace f(t) + d(t,x)\rbrace$. It follows that 
                              
\begin{eqnarray*}                              
(\inf(\delta_e,a) \oplus f)(x) &=& \min \lbrace f(x), \inf_{t/ d(t,x)\geq a} \lbrace f(t)\rbrace +a \rbrace\\
                               &=& f(x).
\end{eqnarray*}

\end{proof}
\begin{Lem} \label{lem1.3} Let $(X,d)$ be an invariant metric group. Then, the following assertions hold. 

$(1)$ for each $f\in Lip^1_+(X)$ and for each bounded function $h\in Lip^1_+(X)$, the function $f\oplus h \in Lip^1_+(X)$ is bounded.

$(2)$ Let $f, g \in Lip^1_+(X)$, then the following assertions are equivalent. 

\hspace{4mm} $(a)$ $f\leq g$

\hspace{4mm} $(b)$ $h\oplus f \leq h \oplus g$, for all function $h \in Lip^1_+(X)$ which is bounded. 
\end{Lem}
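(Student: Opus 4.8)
The plan is to treat the two parts separately, the second equivalence being the substantive one.

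For part $(1)$ I would simply exhibit explicit bounds. Since $\oplus$ is an internal law on $Lip^1_+(X)$ (established in \cite{Ba1}), we already know $f\oplus h\in Lip^1_+(X)$, so only boundedness needs an argument. As $f,h\geq 0$ we get $(f\oplus h)(x)\geq 0$ for every $x$. For the upper bound I would test the infimum in the definition of the inf-convolution at the particular factorization $x=e\cdot x$, which yields $(f\oplus h)(x)\leq f(e)+h(x)\leq f(e)+\sup_X h$. Since $h$ is bounded, $\sup_X h<\infty$, so $f\oplus h$ is bounded (between $0$ and $f(e)+\sup_X h$).

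For part $(2)$, the implication $(a)\Rightarrow(b)$ is just monotonicity of the inf-convolution, and it holds for every $h$, not only bounded ones: if $f\leq g$ pointwise then $h(y)+f(z)\leq h(y)+g(z)$ for each factorization $yz=x$, and taking infima over such factorizations gives $(h\oplus f)(x)\leq (h\oplus g)(x)$.

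The converse $(b)\Rightarrow(a)$ is where the restriction to \emph{bounded} test functions $h$ is actually used, and it is the main point. The idea is to choose, for each fixed $x\in X$, the bounded test function $h=\inf(\delta_e,a)$ with $a$ large enough. This $h$ lies in $Lip^1_+(X)$ (it is nonnegative and $1$-Lipschitz, being the minimum of $\delta_e$ and a constant) and is bounded by $a$, hence is admissible in $(b)$. Fixing $x$ and taking $a\geq\max(f(x),g(x))$, Lemma \ref{lem1.2} applies simultaneously to $f$ and to $g$ at the point $x$, giving $(\inf(\delta_e,a)\oplus f)(x)=f(x)$ and $(\inf(\delta_e,a)\oplus g)(x)=g(x)$. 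Inserting these two equalities into the inequality $(\inf(\delta_e,a)\oplus f)(x)\leq(\inf(\delta_e,a)\oplus g)(x)$ supplied by $(b)$ then yields $f(x)\leq g(x)$; since $x$ is arbitrary, $f\leq g$, as desired.

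The only delicate point I anticipate is the bookkeeping: verifying that the chosen $h=\inf(\delta_e,a)$ genuinely belongs to $Lip^1_+(X)$ and is bounded so that $(b)$ may be invoked, and checking that the single threshold $a\geq\max(f(x),g(x))$ is exactly what Lemma \ref{lem1.2} requires in order to evaluate the inf-convolution of \emph{both} $f$ and $g$ at $x$ at once. Once this is in place the argument is immediate.
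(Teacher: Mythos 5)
Your proof is correct and takes essentially the same route as the paper: for part $(1)$ the bound $0\leq (f\oplus h)(x)\leq f(e)+h(x)$ from the factorization $x=e\cdot x$ together with the monoid structure of $Lip^1_+(X)$, and for part $(2)$ the test function $h=\inf(\delta_e,a)$ with $a\geq\max(f(x),g(x))$ combined with Lemma \ref{lem1.2}. The only difference is cosmetic: you spell out the easy implication $(a)\Rightarrow(b)$ and the admissibility checks that the paper leaves implicit.
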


\begin{proof} $(1)$ Since $0 \leq f\oplus h (x)\leq f(e)+h(x)$ for all $x\in X$ and since $h$ is bounded, it follows that $f\oplus h$ is bounded. On the other hand, $f\oplus h \in Lip^1_+(X)$ since $Lip^1_+(X)$ is a monoide.

$(2)$ The part $(a) \Longrightarrow (b)$ is easy. Let us prove the part $(b) \Longrightarrow (a)$. Indeed, let $x\in X$ and chose a positive real number $a\geq \max(f(x), g(x))$. Set $h:= \inf(\delta_e,a)$. It is clear that $h \in Lip^1_+(X)$ and is bounded. So, from the hypothesis $(b)$ we have that $(\inf(\delta_e,a)\oplus f) \leq (\inf(\delta_e,a)\oplus g)$. Using Lemme \ref{lem1.2}, we obtain that $f(x) \leq g (x)$. 

\end{proof}

\begin{Lem} \label{lem1} Let $A$ be a nonempty set and $f,g :A\longrightarrow \R$ be two functions. Then, the following assertions are equivalent.

$(1)$ $\sup_{x\in A}|f(x)-g(x)| <+\infty$.

$(2)$ $\sup_{x\in A} \frac{|f(x)-g(x)|}{1+|f(x)-g(x)|}< 1$.
\end{Lem}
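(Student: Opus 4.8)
The plan is to reduce the whole statement to the monotonicity of the single real function $\phi(t):=\frac{t}{1+t}$ defined on $[0,+\infty)$. Writing $t_x:=|f(x)-g(x)|\geq 0$, assertion $(1)$ reads $\sup_{x\in A}t_x<+\infty$ and assertion $(2)$ reads $\sup_{x\in A}\phi(t_x)<1$, so the lemma simply asserts that these two suprema are finite / bounded-away-from-$1$ simultaneously. The properties I would record first are that $\phi$ is continuous and strictly increasing on $[0,+\infty)$, that it takes values in $[0,1)$ with $\phi(t)\to 1$ as $t\to+\infty$, and that it is invertible with inverse $\phi^{-1}(s)=\frac{s}{1-s}$ on $[0,1)$. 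Once these are in hand, both implications are immediate.

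For $(1)\Rightarrow(2)$, I would set $M:=\sup_{x\in A}t_x<+\infty$. Since $t_x\leq M$ for every $x$ and $\phi$ is increasing, $\phi(t_x)\leq\phi(M)=\frac{M}{1+M}$ for every $x$, hence $\sup_{x\in A}\phi(t_x)\leq\frac{M}{1+M}<1$, the final strict inequality coming precisely from the finiteness of $M$.

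For $(2)\Rightarrow(1)$, I would set $S:=\sup_{x\in A}\phi(t_x)<1$. Then $\phi(t_x)\leq S<1$ for every $x$, and applying the increasing inverse $\phi^{-1}$ gives $t_x\leq\phi^{-1}(S)=\frac{S}{1-S}$ for every $x$; therefore $\sup_{x\in A}t_x\leq\frac{S}{1-S}<+\infty$.

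There is essentially no hard step here; the only point deserving care is the strictness in $(2)$. The hypothesis must be $S<1$ and not merely $S\leq 1$, since the bound $\frac{S}{1-S}$ blows up as $S\to 1^-$. This reflects the genuine possibility (e.g. $A=\N$, $t_n=n$) of an unbounded family whose $\phi$-values approach but never reach $1$, so that $\sup_{x\in A}\phi(t_x)=1$ while $\sup_{x\in A}t_x=+\infty$. Keeping track of this strictness in both directions is the whole content of the argument.
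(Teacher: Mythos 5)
Your proof is correct and takes essentially the same route as the paper: both directions rest on the monotonicity of $\phi(t)=\frac{t}{1+t}$ on $[0,+\infty)$ and of its inverse $s\mapsto\frac{s}{1-s}$ on $[0,1)$, with the direction $(2)\Rightarrow(1)$ being word-for-word the paper's argument. The only difference is that for $(1)\Rightarrow(2)$ the paper invokes an external result (Lemma 1 of the cited earlier work) giving the exact identity $\sup_{x}\phi(|f(x)-g(x)|)=\phi\bigl(\sup_{x}|f(x)-g(x)|\bigr)$, whereas you use only the one-sided bound $\phi(t_x)\leq\phi(M)$ coming from monotonicity, which is marginally more elementary and self-contained.
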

\begin{proof} Suppose that $(1)$ hold. Using [Lemma 1., \cite{Ba1}], we have that $\sup_{x\in A} \frac{|f(x)-g(x)|}{1+|f(x)-g(x)|}=\frac{\sup_{x\in A}|f(x)-g(x)|}{1+\sup_{x\in A}|f(x)-g(x)|} < 1$. Now, suppose that $(2)$ holds. Set $\alpha=\sup_{x\in A} \frac{|f(x)-g(x)|}{1+|f(x)-g(x)|}<1$. Then, we obtain that $|f(x)-g(x)| \leq\frac{\alpha}{1-\alpha}$, for all $x\in A$. This implies that $\sup_{x\in A}|f(x)-g(x)| <+\infty$.

\end{proof}
\begin{Lem} \label{lem2} Let $(X,d)$ and $(Y,d')$ be two invariant complete metric groups. Let 
$$\Phi: (Lip^1_+(X),\rho)\longrightarrow (Lip^1_+(Y), \rho)$$
be an isometric isomorphism of monoids. Then, for all $f, g \in Lip^1_+(X)$, we have 
$$f\leq g \Longleftrightarrow \Phi(f) \leq \Phi(g).$$

\end{Lem}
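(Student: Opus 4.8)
The plan is to characterize the pointwise order $\le$ intrinsically, using only the metric $\rho$, the law $\oplus$, and the constant functions, all three of which $\Phi$ respects: it is an isometric monoid isomorphism, it fixes every constant by Lemma~\ref{lem3}(1), and it satisfies $\Phi(f+r)=\Phi(f)+r$ by Lemma~\ref{lem3}(3). Once the order relation is written purely in terms of these invariants, the equivalence $f\le g\Leftrightarrow\Phi(f)\le\Phi(g)$ becomes immediate, both implications being read off from the same chain of characterizations (so the converse also follows by applying the argument to the isometric monoid isomorphism $\Phi^{-1}$).

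First I would reduce to bounded functions. A function $h\in Lip^1_+(X)$ is bounded if and only if $\sup_X|h-\inf_X h|<+\infty$, which by Lemma~\ref{lem1} is equivalent to $\rho(h,\inf_X h)<1$; since $\Phi$ is an isometry fixing constants with $\inf_Y\Phi(h)=\inf_X h$ by Lemma~\ref{lem3}(1), both $\Phi$ and $\Phi^{-1}$ preserve boundedness, so $\Phi$ restricts to a bijection between the bounded elements of $Lip^1_+(X)$ and those of $Lip^1_+(Y)$. Now suppose the equivalence $u\le v\Leftrightarrow\Phi(u)\le\Phi(v)$ is already known for bounded $u,v$. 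For arbitrary $f,g$, Lemma~\ref{lem1.3}(2) gives $f\le g$ iff $h\oplus f\le h\oplus g$ for every bounded $h\in Lip^1_+(X)$; since $h\oplus f$ and $h\oplus g$ are then bounded by Lemma~\ref{lem1.3}(1) and $\Phi$ is a morphism, this is equivalent to $\Phi(h)\oplus\Phi(f)\le\Phi(h)\oplus\Phi(g)$ for every such $h$, that is, to $h'\oplus\Phi(f)\le h'\oplus\Phi(g)$ for every bounded $h'\in Lip^1_+(Y)$ (as $h'=\Phi(h)$ runs over all of them), which by Lemma~\ref{lem1.3}(2) applied in $Y$ means exactly $\Phi(f)\le\Phi(g)$.

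The crux is therefore the bounded case, and the obstacle there is that $\rho$ is symmetric and so cannot by itself detect the orientation of the inequality: by Lemma~\ref{lem1} it only recovers the unoriented quantity $\|u-v\|_\infty=\sup_X|u-v|$ via $\|u-v\|_\infty=\rho(u,v)/(1-\rho(u,v))$. The idea is to break this symmetry with the translations $f\mapsto f+r$, $r\ge 0$, which $\Phi$ respects. Writing $P=\sup_X(u-v)$ and $Q=\sup_X(v-u)$, one has $\|(u+r)-v\|_\infty=\max(P+r,\,Q-r)$, so that $\|(u+r)-v\|_\infty-r\to P$ as $r\to+\infty$. Expressing the left-hand side through $\rho$ and the translation, both $\Phi$-invariant by the isometry of $\Phi$ and Lemma~\ref{lem3}(3), and using that $\Phi(u),\Phi(v)$ are bounded, yields $\sup_X(u-v)=\sup_Y(\Phi(u)-\Phi(v))$. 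In particular $u\le v\Leftrightarrow\sup_X(u-v)\le 0\Leftrightarrow\sup_Y(\Phi(u)-\Phi(v))\le 0\Leftrightarrow\Phi(u)\le\Phi(v)$, which settles the bounded case and completes the argument.
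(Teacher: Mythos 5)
Your proof is correct and follows essentially the same route as the paper's: recover the signed quantity $\sup_X(f-g)$ for bounded functions by combining the isometry with the translations $f\mapsto f+r$ (the paper uses the single translation $r=\sup_X|f-g|$ where you let $r\to+\infty$, a cosmetic difference), then reduce the general case to the bounded one via Lemma \ref{lem1.3}, exactly as in the paper. The only minor divergence is that you prove preservation of boundedness directly from the isometry together with Lemma \ref{lem3} (fixed constants and preserved infima), whereas the paper deduces it from the bounded-case order preservation applied to $\Phi^{-1}$; both arguments are valid.
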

\begin{proof} The proof is divided on two cases.

{\it Case1: (The case where $f$ and $g$ are bounded.)} Let $f,g \in Lip^1_+(X)$ be bounded functions. In this case we have $\sup_{x\in X}|f(x)-g(x)|< +\infty$, so using Lemma \ref{lem1} and the fact that $\Phi$ is isometric, we get also that $\sup_{y\in Y}|\Phi(f)(y)-\Phi(g)(y)|< +\infty$. Using [Lemma 1. \cite{Ba1}] and the fact that $\Phi$ is isometric, we obtain that
$$\frac{\sup_{y\in Y}|\Phi(f)(y)-\Phi(g)(y)|}{1+\sup_{y\in Y}|\Phi(f)(y)-\Phi(g)(y)|}=\frac{\sup_{x\in X}|f(x)-g(x)|}{1+\sup_{x\in X}|f(x)-g(x)|}.$$
This implies that
$$\sup_{y\in Y}|\Phi(f)(y)-\Phi(g)(y)|=\sup_{x\in X}|f(x)-g(x)|.$$
Set $r:=\sup_{y\in Y}|\Phi(f)(y)-\Phi(g)(y)|=\sup_{x\in X}|f(x)-g(x)|<+\infty$. By applying the above arguments to $f+r$ and $g$ which are bounded, we also get that 
$$\sup_{y\in Y}|\Phi(f+r)(y)-\Phi(g)(y)|=\sup_{x\in X}|(f+r)(x)-g(x)|.$$ 
Using the fact that $\Phi(f+r)=\Phi(f)+r$ (by Lemma \ref{lem3}) and the choice of the number $r$, we get that
$$\sup_{x\in X}\lbrace \Phi(f)(x)-\Phi(g)(x) +r \rbrace=\sup_{x\in X}\lbrace f(x)-g(x)+r \rbrace$$
which implies that 
$$\sup_{y\in Y}\lbrace \Phi(f)(y)-\Phi(g)(y) \rbrace=\sup_{x\in X}\lbrace f(x)-g(x)\rbrace.$$
It follows that $f\leq g \Longleftrightarrow \Phi(f) \leq \Phi(g).$ Replacing $\Phi$ by $\Phi^{-1}$ we also have $k\leq l \Longleftrightarrow \Phi^{-1}(k) \leq \Phi^{-1}(l),$ for all bounded functions $k, l \in Lip^1_+(Y)$.

{\it Case2: (The general case.)} First, note that for each bounded function $k\in Lip^1_+(Y)$, we have that $\Phi^{-1}(k) \in Lip^1_+(X)$ is bounded. Indeed, there exists $r\in \R^+$ such that $0\leq k \leq r$. Using the above case, we get that $\Phi^{-1}(0) \leq \Phi^{-1}(k)\leq \Phi^{-1}(r)$. This shows that $\Phi^{-1}(k)$ is bounded, since $\Phi^{-1}(0)=0$ and $\Phi^{-1}(r)=r$ by Lemma \ref{lem3}. 

Now, let $f,g \in Lip^1_+(X)$ be two functions such that $f\leq g$. Let $k\in Lip^1_+(Y)$ be any bounded function. It follows that $f \oplus \Phi^{-1}(k) \leq g \oplus \Phi^{-1}(k)$. From the part $(1)$ of Lemma \ref{lem1.3}, we have that $f \oplus \Phi^{-1}(k), g \oplus \Phi^{-1}(k) \in Lip^1_+(X)$ are bounded. Using {\it Case1.}, we get that $\Phi(f \oplus \Phi^{-1}(k))\leq \Phi(g \oplus \Phi^{-1}(k))$. Since $\Phi$ is a morphism, we have that $\Phi(f) \oplus k \leq \Phi(g) \oplus k$, which implies that $\Phi(f) \leq \Phi(g)$ by using the part $(2)$ of Lemma \ref{lem1.3}. The converse is true by changing $\Phi$ by $\Phi^{-1}$.


\end{proof}

\begin{Lem} \label{lem4} Let $(X,d)$ and $(Y,d')$ be two invariant metric groups and let $\Phi$ be a monoid isomorphism $\Phi: (Lip^1_+(X),\oplus, \rho)\longrightarrow (Lip^1_+(Y),\oplus, \rho)$. Then, the following assertions are equivalent.

$(1)$ for all $f, g \in Lip^1_+(X)$, we have that $(f\leq g \Longleftrightarrow \Phi(f)\leq \Phi(g)).$

$(2)$ for all $f', g' \in Lip^1_+(Y)$, we have that $(f'\leq g' \Longleftrightarrow \Phi^{-1}(f')\leq \Phi^{-1}(g')).$
 
$(3)$ for all familly $(f_i)_{i\in I}\subset Lip^1_+(X)$, where $I$ is any nonempty set, we have 
$\Phi(\inf_{i\in I} f_i)=\inf_{i\in I} \Phi(f_i)$.
\end{Lem}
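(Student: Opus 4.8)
The plan is to establish the two implications $(3) \Rightarrow (1)$ and $(1) \Rightarrow (3)$, together with the purely formal equivalence $(1) \Longleftrightarrow (2)$. The latter follows immediately from bijectivity of $\Phi$: given $f', g' \in Lip^1_+(Y)$, set $f := \Phi^{-1}(f')$ and $g := \Phi^{-1}(g')$, so that $\Phi(f) = f'$ and $\Phi(g) = g'$; then the biconditional of $(1)$ applied to the pair $(f,g)$ reads verbatim as the biconditional of $(2)$ applied to $(f',g')$, and symmetrically for the converse. Before treating $(3)$, I would record the elementary observation that the pointwise infimum of any family $(f_i)_{i\in I} \subset Lip^1_+(X)$ is again a member of $Lip^1_+(X)$: it is nonnegative, and for any $x,y \in X$ and any index $j$ one has $\inf_i f_i(x) \le f_j(x) \le f_j(y) + d(x,y)$, so taking the infimum over $j$ gives $1$-Lipschitzness. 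Hence both sides of the identity in $(3)$ are well defined.

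For $(3) \Rightarrow (1)$, I would exploit the fact that the order is encoded by the binary infimum: for $f,g \in Lip^1_+(X)$ one has $f \le g$ if and only if $\inf(f,g) = f$. If $f \le g$, then $\inf(f,g) = f$, and applying $(3)$ to the two-element family yields $\Phi(f) = \Phi(\inf(f,g)) = \inf(\Phi(f),\Phi(g))$, whence $\Phi(f) \le \Phi(g)$. Conversely, if $\Phi(f) \le \Phi(g)$ then $\inf(\Phi(f),\Phi(g)) = \Phi(f)$, while $(3)$ gives $\Phi(\inf(f,g)) = \inf(\Phi(f),\Phi(g)) = \Phi(f)$; injectivity of $\Phi$ then forces $\inf(f,g) = f$, that is $f \le g$.

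The substantial direction is $(1) \Rightarrow (3)$, and the main obstacle lies in the reverse inequality, which is exactly where the order-reflecting property of $\Phi^{-1}$ (namely assertion $(2)$, available since $(1)\Leftrightarrow(2)$) is needed. I would fix a family $(f_i)_{i\in I}$ and set $f := \inf_i f_i$. Since $f \le f_i$ for each $i$, monotonicity from $(1)$ gives $\Phi(f) \le \Phi(f_i)$ for every $i$, so $\Phi(f)$ is a lower bound of $(\Phi(f_i))_{i\in I}$ and therefore $\Phi(f) \le \inf_i \Phi(f_i) =: g$. For the opposite inequality I would pass through $\Phi^{-1}$: from $g \le \Phi(f_i)$ and $(2)$ one obtains $\Phi^{-1}(g) \le f_i$ for all $i$, hence $\Phi^{-1}(g) \le \inf_i f_i = f$; applying $(1)$ once more yields $g \le \Phi(f)$. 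Combining the two inequalities gives $\Phi(f) = g = \inf_i \Phi(f_i)$, which is precisely $(3)$.
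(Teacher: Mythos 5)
Your proof is correct and follows essentially the same route as the paper's: the formal equivalence $(1)\Leftrightarrow(2)$, the order--infimum encoding $f\leq g \Leftrightarrow f=\inf(f,g)$ for $(3)\Rightarrow(1)$, and the two-sided inequality via $\Phi^{-1}$ and assertion $(2)$ for $(1)\Rightarrow(3)$. The only (harmless) deviation is in the converse half of $(3)\Rightarrow(1)$, where you invoke injectivity of $\Phi$ directly, while the paper first shows that $(3)$ transfers to $\Phi^{-1}$ and then applies $\Phi^{-1}$ to both sides; the two arguments are interchangeable.
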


\begin{proof} The part $(1)\Longleftrightarrow (2)$ is clear. Let us prove $(1)\Longrightarrow (3)$. Let $(f_i)_{i\in I}\subset Lip^1_+(X)$, where $I$ is any nonempty set. First, it is easy to see that the infinimum of a nonempty familly of nonnegative and $1$-Lipschitz functions is also nonnegative and $1$-Lipschitz function. So, $\inf_{i\in I} f_i \in Lip^1_+(X)$. For all $i\in I$, we have that $\inf_{i\in I} f_i \leq f_i$, which implies by hypothesis that $\Phi(\inf_{i\in I} f_i)\leq \Phi(f_i)$ for all $i\in I$. Consequently we have that $\Phi(\inf_{i\in I} f_i)\leq \inf_{i\in I}\Phi(f_i)$. On the other hand, since $\inf_{i\in I}\Phi(f_i)\leq \Phi(f_i)$ for all $i\in I$, using $(2)$, we have that $\Phi^{-1}(\inf_{i\in I}\Phi(f_i))\leq f_i$, for all $i\in I$. It follows that, $\Phi^{-1}(\inf_{i\in I}\Phi(f_i))\leq \inf_{i\in I} f_i$. Using $(1)$, we obtain that $\inf_{i\in I}\Phi(f_i)\leq \Phi(\inf_{i\in I} f_i)$. Hence, $\inf_{i\in I}\Phi(f_i)= \Phi(\inf_{i\in I} f_i)$. Now, let us prove that $(3)\Longrightarrow (1)$. First, let us show that from $(3)$ we also have that $\Phi^{-1}(\inf_{i\in I} g_i)=\inf_{i\in I} \Phi^{-1}(g_i)$, where $I$ is a nonempty set and $g_i\in Lip^1_+(Y)$ for all $i\in I$. Indeed, since $\Phi$ is bijective, there exists $(f_i)_{i\in I}\subset Lip^1_+(X)$ such that $g_i=\Phi(f_i)$ for all $i\in I$. Thus, $\inf_{i\in I} g_i=\inf_{i\in I} \Phi(f_i)=\Phi(\inf_{i\in I} f_i)=\Phi(\inf_{i\in I} \Phi^{-1}(g_i))$, which implies that $\Phi^{-1}(\inf_{i\in I} g_i)=\inf_{i\in I} \Phi^{-1}(g_i)$. Now, let $f, g \in Lip^1_+(X)$. We have that $f\leq g \Longleftrightarrow f=\inf(f,g)$, so if $f\leq g$ then $\Phi(f)=\Phi(\inf(f,g))=\inf(\Phi(f),\Phi(g))$. This implies that $\Phi(f)\leq \Phi(g)$. Conversely, if $\Phi(f)\leq \Phi(g)$ then $\Phi(f)=\inf(\Phi(f), \Phi(g))$ and so $f=\Phi^{-1}(\Phi(f))=\Phi^{-1}(\inf(\Phi(f), \Phi(g)))=\inf(\Phi^{-1}(\Phi(f)),\Phi^{-1}(\Phi(g)))=\inf(f,g)$. This implies that $f\leq g$.

\end{proof}
\subsection{Proof of the main result.} \label{S2}
Now, we give the proof of the main result.

\begin{proof}[Proof of Theorem \ref{Thm0}.] We know from [Lemma 3. , \cite{Ba1}] that the map 
\begin{eqnarray} \chi_X : (Lip^1_+(X),\oplus,\rho) &\rightarrow& (Lip^1_+(\overline{X}),\oplus,\rho)\nonumber\\
                            f &\mapsto& \overline{f}\nonumber
 \end{eqnarray}
is an isometric isomorphism of monoids, where $\overline{f}$ denotes the unique $1$-Lipschitz extention of $f$ to $\overline{X}$. 
Let us define the map $\overline{\Phi} : (Lip^1_+(\overline{X}),\oplus,\rho) \longrightarrow (Lip^1_+(\overline{Y}),\oplus,\rho)$ by $\overline{\Phi}:= \chi_Y \circ \Phi \circ \chi^{-1}_X$. Then, $\overline{\Phi}$ is an isometric isomorphism of monoids. 

$(1) \Longrightarrow (2)$. Since $Lip^{1}_+(\overline{X})$ is a monoid having $\delta_e: \overline{X}\ni x\mapsto \overline{d}(x,e)$ as identity element, we have that $\overline{f}=\delta_e\oplus \overline{f}$ for all $\overline{f}\in Lip^{1}_+(\overline{X})$. Thus, $\overline{f}=\inf_{t\in \overline{X}}\lbrace \overline{f}(t)+\delta_t\rbrace$ for all $\overline{f}\in Lip^{1}_+(\overline{X})$. Using Lemma \ref{lem4} together with Lemma \ref{lem2}, we have that for all $\overline{f}\in Lip^1_+(\overline{X})$, $\overline{\Phi}(\overline{f})=\overline{\Phi}(\inf_{t\in \overline{X}}\lbrace \overline{f}(t)+\delta_t\rbrace)=\inf_{t\in \overline{X}} \overline{\Phi}(\overline{f}(t)+\delta_t)$. Using Lemma \ref{lem3}, there exists an isometric isomorphism of groups $T:(\overline{X},\overline{d}) \longrightarrow (\overline{Y},\overline{d'})$ such that $\overline{\Phi}(\overline{f}(t)+\delta_t)=\overline{f}(t)+ \delta_{T(t)}$, for all $t\in \overline{X}$. Thus, we get that $\overline{\Phi}(\overline{f})=\inf_{t\in \overline{X}}\lbrace \overline{f}(t)+ \delta_{T(t)}\rbrace$. Equivalently, for all $y\in \overline{Y}$, we have 
\begin{eqnarray*}
\overline{\Phi}(\overline{f})(y)&=& \inf_{t\in \overline{X}}\lbrace \overline{f}(t)+ \delta_{T(t)}(y)\rbrace\\
          &=& \inf_{t\in \overline{X}}\lbrace \overline{f}(t)+ \overline{d'}(y,T(t))\rbrace\\ 
          &=& \inf_{t\in \overline{X}}\lbrace \overline{f}(t)+ \overline{d}(T^{-1}(y),t)\rbrace\\
          &=& (\delta_e\oplus \overline{f})(T^{-1}(y))\\
          &=& \overline{f}(T^{-1}(y))\\
          &=& \overline{f}\circ T^{-1} (y).
\end{eqnarray*}
From the formulas $\Phi=\chi^{-1}_Y\circ \overline{\Phi}\circ \chi_X$, we get that $\Phi (f)=(\overline{f}\circ T^{-1})_{|Y}$ for all 
$f\in Lip^1_+(X)$. 

$(2) \Longrightarrow (1)$. If $T:(\overline{X},\overline{d}) \longrightarrow (\overline{Y},\overline{d'})$ is an isometric isomorphism of groups, then clearly the map $\overline{\Phi}$ defined by $\overline{\Phi}(\overline{f}):=\overline{f}\circ T^{-1}$ for all $\overline{f}\in Lip^1_+(\overline{X})$, gives an isometric isomorphism from $(Lip^1_+(\overline{X}),\oplus,\rho)$ onto $(Lip^1_+(\overline{Y}),\oplus,\rho)$. Thus, the map $\Phi:=\chi^{-1}_Y\circ \overline{\Phi}\circ \chi_X$ gives an isometric isomorphism from $(Lip^1_+(X),\oplus,\rho)$ onto $(Lip^1_+(Y),\oplus,\rho)$. Now, it clear that $\Phi(f)=(\overline{f}\circ T^{-1})_{|Y}$ for all $f\in Lip^1_+(X)$.

\end{proof}

\begin{Rem} \label{rem1} $(1)$ The description of all isomorphisms seems to be more complicated than the representations of the isometric isomorphisms. Here is two examples of isomorphisms which are not isometric.

\hspace{3mm} $(a)$ The map $\Phi : Lip^1_+(X) \longrightarrow Lip^1_+(X)$ defined by $\Phi(f)=f+\inf_X(f)$ for all $f \in Lip^1_+(X)$, is an isomorphism of monoids which respect the order but is not isometric for $\rho$ (the proof is similar to the proof of [Theorem 7., \cite{Ba}]. Note that we always have $\inf_{X}(f\oplus g)=\inf_X(f)+\inf_Y(g)$). 

\hspace{3mm} $(b)$ The map $\Phi : Lip^1_+(\R) \longrightarrow Lip^1_+(\R)$ defined by $\Phi(f)(x)=f(x+\inf_X(f))$ for all $f \in Lip^1_+(\R)$ and all $x\in \R$, is an isomorphism but not isometric for $\rho$.

$(2)$ Following the proof of Theorem \ref{Thm0} and changing "$1$-Lipschitz function" by "$1$-Lipschitz and convex function", we get a positive answer to the problem 2. in \cite{Ba}.
\end{Rem}

\section{The group of units.} \label{S3}

In order that the inf-convolution of two functions $f$ and $g$ takes finit values i.e $f\oplus g >-\infty$, we need to assume that $f$ and $g$ are bound from below. Since, we work with Lipschitz maps, for simplicity, we assume in this section, that $(X,d)$ is a bounded invariant metric group. By $Lip^1_0(X)$ we denote the set of all $1$-Lipschitz map $f$ from $X$ into $\R$ such that $\inf_X(f)=0$. By $Lip^1(X)$ (resp. $Lip(X)$, ) we denote the set of all $1$-Lipschitz map (resp. the set of all Lipchitz map) defined from $X$ to $\R$. We have that 
$$Lip^1_0(X) \subset Lip^1_+(X) \subset Lip^1(X) \subset Lip(X).$$ 

\begin{Prop} \label{MS} Let $(X,d)$ be a bounded invariant metric (abelian) group. Then, the sets $Lip^1_0(X)$, $Lip^1_+(X)$ and $Lip^1(X)$ are (abelian) monoids having $\delta_e$ as identity element and $Lip(X)$ is a (abelian) semigroup.
\end{Prop}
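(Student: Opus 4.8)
The plan is to verify, for each of the four function classes, the defining axioms of the relevant algebraic structure: closure under $\oplus$, associativity, and (for the three classes of $1$-Lipschitz maps) that $\delta_e$ is a two-sided identity; commutativity will then follow at once in the abelian case. I would organize the argument so that the two genuinely structural facts, closure and associativity, are proved once in a form valid for all of $Lip(X)$, and then specialized.

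First I would settle finiteness and closure. Since $(X,d)$ is bounded, every Lipschitz map is bounded, so for $f,g \in Lip(X)$ the quantity $(f\oplus g)(x)=\inf_{yz=x}\{f(y)+g(z)\}$ lies between $\inf_X f + \inf_X g$ and $f(e)+g(x)$, hence is finite. To see $f\oplus g$ is Lipschitz, fix $x_1,x_2$ and an almost-optimal factorization $x_1=y_1 z_1$; rewriting $x_2 = (x_2 z_1^{-1}) z_1$ and using the invariance $\overline{d}$-style identity $d(x_2 z_1^{-1},x_1 z_1^{-1})=d(x_1,x_2)$ one gets $(f\oplus g)(x_2)-(f\oplus g)(x_1)\le k_f\, d(x_1,x_2)$ after letting the approximation error tend to $0$, while the symmetric factorization $x_2=y_1(y_1^{-1}x_2)$ yields the same bound with $k_g$; thus $f\oplus g$ is $\min(k_f,k_g)$-Lipschitz. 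In particular the $1$-Lipschitz property is preserved, so $Lip^1(X)$ is closed; nonnegativity is obviously preserved, giving closure of $Lip^1_+(X)$; and since $\inf_X(f\oplus g)=\inf_X f+\inf_X g$, the normalization $\inf_X(\cdot)=0$ is preserved, giving closure of $Lip^1_0(X)$.

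Next, associativity. Both $((f\oplus g)\oplus h)(x)$ and $(f\oplus(g\oplus h))(x)$ unwind, after interchanging the nested infima (legitimate since an infimum of infima is the infimum over the product index set), to the single expression $\inf_{yzv=x}\{f(y)+g(z)+h(v)\}$, so they coincide. For the identity, I would record that $(\delta_e\oplus f)(x)=\inf_y\{d(x,y)+f(y)\}=f(x)$ for every $1$-Lipschitz $f$: the inequality $f(x)\le f(y)+d(x,y)$ gives ``$\ge$'' and the choice $y=x$ gives ``$\le$''; the symmetric computation gives $f\oplus\delta_e=f$. Since $\delta_e\in Lip^1_0(X)\subset Lip^1_+(X)\subset Lip^1(X)$, it serves as identity in all three classes (this is already recorded in \cite{Ba1} for $Lip^1_+(X)$). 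No identity is claimed for $Lip(X)$, consistent with the fact that $\delta_e\oplus f$ is merely the largest $1$-Lipschitz minorant of $f$, which differs from $f$ when $f$ is not $1$-Lipschitz; hence $Lip(X)$ is only a semigroup. Finally, when $X$ is abelian, $yz=zy$ makes the factorization index set symmetric in the two factors, so $f\oplus g=g\oplus f$ throughout.

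The main obstacle I anticipate is the closure of $Lip(X)$ under $\oplus$, specifically the Lipschitz estimate, where non-commutativity forces one to choose the correct one-sided factorization and to invoke invariance of $d$ in exactly the right places; the associativity and identity steps are essentially formal once the interchange of infima is justified.
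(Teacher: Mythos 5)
Your proposal is correct, and it is essentially the argument the paper has in mind: the paper's own ``proof'' is a one-line deferral to [Proposition 1, Ba1] (the direct verification of the monoid axioms for the inf-convolution), and your write-up supplies exactly those details --- closure via the one-sided factorizations $x_2=(x_2z_1^{-1})z_1$ and $x_2=y_1(y_1^{-1}x_2)$ together with invariance of $d$, associativity by collapsing the nested infima to $\inf_{yzv=x}$, the identity property of $\delta_e$ from the $1$-Lipschitz inequality, and the additivity $\inf_X(f\oplus g)=\inf_X f+\inf_X g$ to handle $Lip^1_0(X)$, with boundedness of $X$ guaranteeing finiteness on $Lip^1(X)$ and $Lip(X)$. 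In particular your observation that $\delta_e\oplus f$ is only the largest $1$-Lipschitz minorant of $f$ correctly explains why $Lip(X)$ is claimed to be merely a semigroup.
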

\begin{proof} The proof is similar to [Proposition 1., \cite{Ba1}].
\end{proof}

Note that since $(X,d)$ is bounded, each function $f \in Lip^1(X)$ (resp. $f \in Lip(X)$) is dounded and so $d_{\infty}(f,g):=\sup_{x\in X}|f(x)-g(x)|< +\infty$ for all $f, g \in Lip^1(X)$ (resp. $f,g \in Lip(X)$). In this case, from [Lemma 1., \cite{Ba1}], we have that $$\rho=\frac{d_{\infty}}{1+d_{\infty}}$$ on $Lip(X)$. We also consider the following metric: 
$$\theta_{\infty}(f, g):= d_{\infty}(f-\inf_X(f),g-\inf_X(g))+|\inf_X(f)-\inf_X(g)|, \hspace{3mm} \forall f, g \in Lip(X).$$ 

\begin{Prop} \label{Ptau} Let $(X,d)$ be a bounded invariant metric group. Then, the following map
\begin{eqnarray*}
\tau : (Lip^1(X),\theta_{\infty}) &\longrightarrow& (Lip^1_0(X)\times \R,d_{\infty}+|.|)\\
                          f   &\mapsto & (f-\inf_X(f),\inf_X(f)).
\end{eqnarray*}
is an isomeric isomorphism of monoids, where $Lip^1_+(X)\times \R$ is equiped with the operation $\overline{\oplus}$ defined by $(f,c)\overline{\oplus} (f',c'):=(f\oplus f',c+c')$.. 
\end{Prop}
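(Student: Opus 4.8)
The plan is to exhibit an explicit inverse and then verify the two structural properties (monoid homomorphism and isometry) by direct computation, both of which reduce to two elementary identities for the inf-convolution.

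First I would check that $\tau$ actually lands in the stated codomain: for $f\in Lip^1(X)$ the function $f-\inf_X(f)$ is again $1$-Lipschitz and has infimum $0$, so $f-\inf_X(f)\in Lip^1_0(X)$, while $\inf_X(f)\in\R$ is finite because $X$ is bounded. I would then propose the candidate inverse $\sigma:(g,c)\mapsto g+c$ from $Lip^1_0(X)\times\R$ into $Lip^1(X)$. Since $g+c$ is $1$-Lipschitz with $\inf_X(g+c)=\inf_X(g)+c=c$, one gets $\sigma(g,c)-\inf_X(\sigma(g,c))=g$ and $\inf_X(\sigma(g,c))=c$, hence $\tau\circ\sigma=\id$; the symmetric computation gives $\sigma\circ\tau=\id$, so $\tau$ is a bijection.

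For the homomorphism property I would isolate two identities. The first is the additivity of the infimum, $\inf_X(f\oplus f')=\inf_X(f)+\inf_X(f')$, which is recorded in Remark \ref{rem1} and follows in one line by letting the pair $(y,z)$ with $yz=x$ range freely over $X\times X$. The second is the translation rule $(f+a)\oplus(f'+b)=(f\oplus f')+(a+b)$ for constants $a,b$, immediate from the definition of $\oplus$. Taking $a=-\inf_X(f)$ and $b=-\inf_X(f')$ and combining the two identities yields $(f-\inf_X(f))\oplus(f'-\inf_X(f'))=(f\oplus f')-\inf_X(f\oplus f')$, so that $\tau(f\oplus f')=((f\oplus f')-\inf_X(f\oplus f'),\,\inf_X(f\oplus f'))=\tau(f)\,\overline{\oplus}\,\tau(f')$. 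I would also confirm that units are preserved: since $\inf_X(\delta_e)=d(e,e)=0$, we have $\tau(\delta_e)=(\delta_e,0)$, which is the identity of $(Lip^1_0(X)\times\R,\overline{\oplus})$.

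Finally, the isometry is essentially built into the definition of $\theta_\infty$: expanding $(d_\infty+|\cdot|)(\tau(f),\tau(g))=d_\infty(f-\inf_X(f),\,g-\inf_X(g))+|\inf_X(f)-\inf_X(g)|$ reproduces $\theta_\infty(f,g)$ verbatim. I do not anticipate a genuine obstacle; the only point deserving care is to confirm that $\overline{\oplus}$ is an internal operation on $Lip^1_0(X)\times\R$, i.e.\ that $f\oplus f'$ remains in $Lip^1_0(X)$ for $f,f'\in Lip^1_0(X)$, but this follows at once from the monoid statement of Proposition \ref{MS} together with the infimum-additivity above.
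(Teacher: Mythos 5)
Your proof is correct and takes essentially the same route as the paper's: the paper's own proof merely asserts that $\tau$ is a monoid isomorphism and that the isometry is immediate from the definition of $\theta_{\infty}$, while you supply the verifications this assertion rests on (the explicit inverse $(g,c)\mapsto g+c$, the identities $\inf_X(f\oplus f')=\inf_X(f)+\inf_X(f')$ and $(f+a)\oplus(f'+b)=(f\oplus f')+(a+b)$, and preservation of the unit $(\delta_e,0)$). Nothing to correct; your write-up also quietly fixes the statement's slip of writing $Lip^1_+(X)\times\R$ where the codomain is $Lip^1_0(X)\times\R$.
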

\begin{proof} Clearly, $(Lip^1_+(X)\times \R,\overline{\oplus})$ is a monoid having $(\delta_e,0)$ as identity element, since $(Lip^1_+(X),\oplus)$ is a monoid having $\delta_e$ as identity element. It is also clear that $\tau$ is a monoid isomorphism. Now, $\tau$ is isometric by the defintion of $\theta_{\infty}$. It follows that $\tau$ is an isometric isomorphism,

\end{proof}
The following proposition gives an alternative way to consider the group completion of invariant metric groups. Recall that if $(M,\cdot)$ is a monoid having $e_M$ as identity element, the group of units of $M$ is the set $$\mathcal{U}(M):=\lbrace m\in M/\hspace{1mm} \exists m'\in M: m\cdot m'=m'\cdot m=e_M \rbrace.$$
The symbol $\cong$ means isometrically isomorphic as groups.  We give below an analogue to [Corollary 1., \cite{Ba1}], for each of the spaces $Lip^1_0(X), Lip^1(X)$ and $Lip(X).$ Note that in the part $(1)$ of the following proposition as in [Corollary 1.,\cite{Ba1}], we do not need to assume that $X$ is bounded. 

\begin{Prop} \label{Punit} Let $(X,d)$ be a bounded invariant metric group. Then, we have that 

$(1)$ $(\mathcal{U}(Lip^1_0(X)),d_{\infty})=(\mathcal{U}(Lip^1_+(X)),d_{\infty})\cong (\overline{X}, d),$
 
$(2)$ $(\mathcal{U}(Lip^1(X)),\theta_{\infty})\cong (\overline{X}\times \R, d+ |.|).$

$(3)$ The group $\mathcal{U}(Lip^1(X))$ is the maximal subgroup of the semigroup $Lip(X)$, having $\delta_e$ as identity element.

\end{Prop}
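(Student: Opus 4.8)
The plan is to dispatch the three assertions in turn, using [Corollary 1., \cite{Ba1}] for part $(1)$, Proposition \ref{Ptau} for part $(2)$, and one short computation about inf-convolution with $\delta_e$ for part $(3)$.

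For part $(1)$, I would recall from [Theorem 1., \cite{Ba1}] and [Corollary 1., \cite{Ba1}] that the group of units of $Lip^1_+(X)$ is, up to the isometric identification $\gamma_X$, exactly the set of distance functions $\{\delta_{\overline{x}} : \overline{x}\in\overline{X}\}$, and that $(\mathcal{U}(Lip^1_+(X)),d_\infty)\cong(\overline{X},d)$. The only thing to add is the equality $\mathcal{U}(Lip^1_0(X))=\mathcal{U}(Lip^1_+(X))$. I would first note that $Lip^1_0(X)$ is a submonoid of $Lip^1_+(X)$ sharing the identity $\delta_e$: indeed $\inf_X\delta_e=0$, and the relation $\inf_X(f\oplus g)=\inf_X f+\inf_X g$ (recorded in Remark \ref{rem1}) keeps $Lip^1_0(X)$ stable under $\oplus$. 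Since $X$ is dense in $\overline{X}$, each unit $\delta_{\overline{x}}$ satisfies $\inf_X\delta_{\overline{x}}=0$, so every unit of $Lip^1_+(X)$ — together with its inverse $\delta_{\overline{x}^{-1}}$ — already lies in $Lip^1_0(X)$; this gives $\mathcal{U}(Lip^1_+(X))\subseteq\mathcal{U}(Lip^1_0(X))$, while the reverse inclusion is automatic because $Lip^1_0(X)\subseteq Lip^1_+(X)$ with the same identity. The $d_\infty$-isometry onto $(\overline{X},d)$ is the cited one, confirmed directly through $d_\infty(\delta_{\overline{x}},\delta_{\overline{y}})=\overline{d}(\overline{x},\overline{y})$.

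For part $(2)$, I would invoke that $\tau$ of Proposition \ref{Ptau} is an isometric isomorphism of monoids from $(Lip^1(X),\theta_\infty)$ onto $(Lip^1_0(X)\times\R,d_\infty+|\cdot|)$ equipped with $\overline{\oplus}$. An isomorphism of monoids carries the group of units onto the group of units, and the group of units of a direct-product monoid is the product of the groups of units of the factors. Since $(\R,+)$ is a group, $\mathcal{U}(\R,+)=\R$, and by part $(1)$ one has $\mathcal{U}(Lip^1_0(X),\oplus)=\mathcal{U}(Lip^1_+(X))\cong(\overline{X},d)$. Tracking the metric through $\tau$, namely $\theta_\infty\mapsto d_\infty+|\cdot|\mapsto d+|\cdot|$, then yields $(\mathcal{U}(Lip^1(X)),\theta_\infty)\cong(\overline{X}\times\R,d+|\cdot|)$.

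For part $(3)$, the crux is a rewriting of inf-convolution with $\delta_e$. Using invariance of the metric, for every $f\in Lip(X)$ one gets
\[
(\delta_e\oplus f)(x)=\inf_{z\in X}\{d(x,z)+f(z)\},
\]
which is precisely the greatest $1$-Lipschitz minorant of $f$; in particular $\delta_e\oplus f=f$ if and only if $f\in Lip^1(X)$. Now $\mathcal{U}(Lip^1(X))$ is a subgroup of the semigroup $(Lip(X),\oplus)$ with identity $\delta_e$. Conversely, if $G\subseteq Lip(X)$ is any subgroup with identity $\delta_e$, then every $f\in G$ satisfies $\delta_e\oplus f=f$, hence $f\in Lip^1(X)$; its $G$-inverse is likewise in $Lip^1(X)$, and both lie in $Lip^1(X)$ with $\oplus$-product $\delta_e$, so $f\in\mathcal{U}(Lip^1(X))$. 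Thus $G\subseteq\mathcal{U}(Lip^1(X))$, which establishes maximality. I expect the only genuine (though modest) obstacle to be this part $(3)$: correctly identifying $\delta_e\oplus f$ as the largest $1$-Lipschitz minorant via the invariance-based rewriting, since that is exactly the mechanism forcing a member of a subgroup with identity $\delta_e$ to be $1$-Lipschitz. Parts $(1)$ and $(2)$ are essentially bookkeeping layered on [Corollary 1., \cite{Ba1}] and Proposition \ref{Ptau}.
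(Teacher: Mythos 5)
Your proposal is correct and follows essentially the same route as the paper: part $(1)$ rests on [Corollary 1., \cite{Ba1}] together with the identification $\mathcal{U}(Lip^1_0(X))=\mathcal{U}(Lip^1_+(X))$, part $(2)$ transports the group of units through the isometric isomorphism $\tau$ of Proposition \ref{Ptau} and uses $\mathcal{U}(Lip^1_0(X)\times\R)=\mathcal{U}(Lip^1_0(X))\times\R$, and part $(3)$ uses that $\delta_e\oplus f=f$ forces $f$ to be $1$-Lipschitz. Your write-up is in fact slightly more explicit than the paper's at two points (the density argument giving $\inf_X\delta_{\overline{x}}=0$ in part $(1)$, and the greatest-$1$-Lipschitz-minorant justification plus the handling of inverses in part $(3)$), but the underlying approach is the same.
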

\begin{proof}
$(1)$ The fact that $(\mathcal{U}(Lip^1_+(X)),d_{\infty})\cong (\overline{X}, d)$, is given in [Corollary 1., \cite{Ba1}]. On the other hand, since, $\mathcal{G}(X)\subset \mathcal{U}(Lip^1_0(X))\subset \mathcal{U}(Lip^1_+(X))$ and since 
$\overline{\mathcal{G}(X)}\cong \overline{X}$ (see [Lemma 2., \cite{Ba1}]) we get that $\mathcal{U}(Lip^1_0(X))=\mathcal{U}(Lip^1_+(X))$. Let us prove the part $(2)$. Indeed, since $\tau$ (Proposition \ref{Ptau}) is an isometric isomorphism, it sends isometrically the group of units onto the group of units. Hence, from Proposition \ref{Ptau} we have $$(\mathcal{U}(Lip^1(X)),\oplus,\theta_{\infty})\cong (\mathcal{U}(Lip^1_0(X)\times \R),\overline{\oplus},d_{\infty}+|.|).$$ Since $\mathcal{U}(Lip^1_0(X)\times \R)=\mathcal{U}(Lip^1_0(X))\times \R$, the conclusion follows from the first part. For the part $(3)$, let $f$ be an element of the maximal group having $\delta_e$ as identity element. Then, $f\oplus \delta_e=f$ and so it follows that $f$ is $1$-lipschitz map i.e $f\in Lip^1(X)$. Thus, $f\in \mathcal{U}(Lip^1(X))$.
\end{proof}

\section{The algebraic case.} \label{S4}
Let $G$ be an algebraic group having $e$ as identity element and let $f : G\longrightarrow \R^+$ be a function, we denote $Osc(f):=\sup_{t,t'\in G}|f(t)-f(t')|$ and by $G^*$ we denote the following set :
$$G^*:=\lbrace f : G \longrightarrow \R^+/ Osc(f)\leq 1\rbrace.$$
Note that the set $G^*$ is juste the set $Lip^1_+(G)$ where $(G,disc)$ is equipped with the discrete metric $"disc"$, which is an invariant complete metric. So, $(G^*, \oplus)$ is a monoid having $\delta_e$ as identity element, where $\delta_e(\cdot):=disc(\cdot,e)$ i.e. $\delta_e(e)=0$ and $\delta_e(t)=1$ for all $t\neq e$. Observe also that two algebraic groups $G$ and $G'$ are isomorphic if and only they are isometrically isomorphic when equipped respectively with the discrete metric. Thus, we obtain that the algebraic group structure of any group $G$ is completely determined by the algebraic monoid structure of $(G^*, \oplus)$.

\begin{Cor} \label{cor1} Let $G$ and $G'$ be two groups. Then the following assertions are equivalent.

$(1)$ the groups $G$ and $G'$ are isomorphic

$(2)$ the monoids $(G^*, \oplus, \rho)$ and $({G'}^*, \oplus, \rho)$ are isometrically isomorphic

$(3)$ the monoids $(G^*, \oplus, d_{\infty})$ and $({G'}^*, \oplus, d_{\infty})$ are isometrically isomorphic (where $d_{\infty}(f,g):=\sup_{t\in G}|f(t)-g(t)|< +\infty$, for all $f, g \in G^*$)

$(4)$ the monoids $(G^*, \oplus)$ and $({G'}^*, \oplus)$ are isomorphic.

\vskip5mm

Moreover, $\Phi : (G^*, \oplus, \rho) \longrightarrow ({G'}^*, \oplus, \rho)$ (resp. $\Phi : (G^*, \oplus, d_{\infty}) \longrightarrow ({G'}^*, \oplus, d_{\infty})$) is an isometric isomorphism of monoids, if and only if there exists an isomorphism of groups $T: G\longrightarrow G'$ such that $\Phi(f)=f\circ T^{-1}$ for all $f\in G^*$.
\end{Cor}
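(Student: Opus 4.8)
The plan is to deduce everything from Theorem \ref{Thm0} applied to the invariant complete metric groups $(G,disc)$ and $(G',disc)$, after recording three elementary observations about the discrete metric. First I would note that a map $f\colon G\to\R$ is $1$-Lipschitz for $disc$ precisely when $Osc(f)\le 1$, and nonnegative precisely when it takes values in $\R^+$; hence $G^*=Lip^1_+(G)$ as monoids. Second, since $(G,disc)$ is already complete, $\overline{G}=G$ and $\overline{f}=f$ for every $f\in G^*$, so in the discrete setting the canonical representation of Theorem \ref{Thm0} collapses to $\Phi(f)=f\circ T^{-1}$. Third, a bijection $T\colon G\to G'$ is automatically an isometry for the discrete metrics (both $disc(Tx,Ty)$ and $disc(x,y)$ equal $1$ when $x\ne y$ and $0$ otherwise), so an isometric group isomorphism between $(G,disc)$ and $(G',disc)$ is the same thing as an abstract group isomorphism $G\to G'$. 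With these remarks, the equivalence $(1)\Leftrightarrow(2)$ and the ``$\rho$''-part of the final assertion are exactly Theorem \ref{Thm0} read in the discrete case.

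For $(2)\Leftrightarrow(3)$ I would use that $(G,disc)$ is bounded: every $f\in G^*$ satisfies $f\in[\inf_G f,\inf_G f+1]$, so $d_\infty(f,g)<+\infty$ for all $f,g\in G^*$, and by [Lemma 1., \cite{Ba1}] we have $\rho=\frac{d_\infty}{1+d_\infty}$ on $G^*$ (and on ${G'}^*$). Since $t\mapsto \frac{t}{1+t}$ is a strictly increasing injection on $\R^+$, a bijection $\Phi$ preserves $\rho$ if and only if it preserves $d_\infty$. Consequently a monoid isomorphism $\Phi$ is a $\rho$-isometry exactly when it is a $d_\infty$-isometry; this yields $(2)\Leftrightarrow(3)$ at once and simultaneously transfers the canonical representation from the ``$\rho$''-case to the ``$d_\infty$''-case, establishing the remaining part of the final assertion.

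It remains to close the loop through $(4)$. The implication $(2)\Rightarrow(4)$ is immediate, since an isometric isomorphism is in particular an isomorphism. The heart of the statement is $(4)\Rightarrow(1)$: here one only has an abstract monoid isomorphism $\Phi\colon(G^*,\oplus)\to({G'}^*,\oplus)$, with no metric information, and one must still recover a group isomorphism $G\cong G'$. The key point is that a monoid isomorphism maps the group of units onto the group of units, so $\Phi$ restricts to a group isomorphism $\mathcal{U}(G^*)\to\mathcal{U}({G'}^*)$. Because $(G,disc)$ is complete, [Theorem 1., \cite{Ba1}] identifies $\mathcal{U}(G^*)=\mathcal{G}(G)=\{\delta_x:x\in G\}$, and $\gamma_G\colon x\mapsto\delta_x$ is a group isomorphism from $G$ onto $\mathcal{G}(G)$ by [Lemma 2., \cite{Ba1}] (and likewise for $G'$). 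Hence $T:=\gamma_{G'}^{-1}\circ\Phi_{|\mathcal{G}(G)}\circ\gamma_G$ is a group isomorphism $G\to G'$, proving $(1)$. This purely algebraic step, extracting the group from the group of units and using only that completeness of the discrete metric forces $\mathcal{U}(G^*)=\mathcal{G}(G)$, is where I expect the main difficulty to lie; the rest is routine specialization of Theorem \ref{Thm0} together with the metric comparison $\rho=\frac{d_\infty}{1+d_\infty}$.
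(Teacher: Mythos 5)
Your proposal is correct and follows essentially the same route as the paper: specializing Theorem \ref{Thm0} to the discrete metric (where group isomorphisms and isometric isomorphisms coincide and $\overline{G}=G$), using [Lemma 1., \cite{Ba1}] to pass between $\rho$ and $d_{\infty}$, and recovering $(4)\Rightarrow(1)$ from the fact that a monoid isomorphism carries the group of units onto the group of units. The only cosmetic differences are that you close the cycle via $(2)\Rightarrow(4)$ rather than $(3)\Rightarrow(4)$, and for $(4)\Rightarrow(1)$ you invoke [Theorem 1., \cite{Ba1}] and [Lemma 2., \cite{Ba1}] directly where the paper cites its Proposition \ref{Punit}, which packages the same results.
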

\begin{proof} Since $G^*=Lip^1_+(G)$, where $G$ is equipped with the discrete metric and since $G$ and $G'$ are isomorphic if and only if $(G,disc)$ and $(G',disc)$ are isometrically isomorphic, then the part $(1)\Longleftrightarrow (2)$ is a direct consequence of Theorem \ref{Thm0}. The part $(2)\Longrightarrow (3)$, follows from the fact that $\rho =\frac{d_{\infty}}{1+d_{\infty}}$ by using [Lemma 1, \cite{Ba1}]. The part $(3)\Longrightarrow (4)$ is trivial. Let us prove $(4)\Longrightarrow (1)$. Since an isomorphism of monoids sends the group of unit onto the group of unit, and since the group of unit of $G^*$ (resp. of ${G'}^*)$ is isomorphic to $G$ (resp. to $G'$) by Proposition \ref{Punit}, we get that $G$ and $G'$ are isomorphic. The last assertion is given by Theorem \ref{Thm0}. 

\end{proof}
As mentioned in Remark \ref{rem1}, if $T: G\longrightarrow G'$ is an isomorphism, then $\Phi(f):=f\circ T^{-1} + \inf_G(f)$ for all $f\in G^*$ gives an isomorphism of monoids between $G^*$ and ${G'}^*$ which is not isometric.
\vskip5mm
In the following exemple, we treat the case where $G$ is a finite group.
\begin{Exemp} \label{Exemp1}  Let $n\geq 1$ and $(\R^n,d_{\infty})$ the usual $n$-dimentional space equiped with the max-distance. The subsets $M^n_+$ and $M^n$ of $\R^n$ are defined as follows $$M^n_+:=\lbrace (x_k)_{1\leq k \leq n} \in \R_+^n/ |x_i-x_j|\leq 1, \hspace{2mm} \hspace{2mm} 1\leq i, j \leq n\rbrace.$$
$$M^n:=\lbrace (x_k)_{1\leq k \leq n} \in \R^n/ |x_i-x_j|\leq 1, \hspace{2mm} \hspace{2mm} 1\leq i, j \leq n\rbrace.$$

Let $G:=\lbrace g_1, g_2,... g_n\rbrace$, be a group of cardinal $n$, where $g_1$ is the identity of $G$. We define the law $\star_G$ on $\R^n$ depending on $G$ as follows: for all $x=(x_k)_k, y=(y_k)_k\in \R^n$, 
$$x\star_G y=(z_k)_{1\leq k \leq n},$$ where for each $1\leq k \leq n$,
$$z_k:=\min \lbrace x_i+y_j/  g_i\cdot g_j=g_k, 1\leq i, j \leq n\rbrace.$$ Then, 

$(1)$ The set $(\R^n,\star_G)$ has a semigroup structure (and is abelian if $G$ is abelian). 

$(2)$ The sets $(M^n_+,\star_G)$ and $(M^n,\star_G)$ are monoids having $e=(0,1,1,1,...,1)$ as identity element.

$(3)$ Let $G$ and $G'$ be two groups of cardinal $n$. The monoids $(M^n_+,\star_G)$ and $(M^n_+,\star_{G'})$ are isomorphic if and only if the groups $G$ and $G'$ are isomorphic.

$(4)$ We have that $$\mathcal{U}(M^n_+)\simeq G,$$
$$\mathcal{U}(M^n) \simeq G\times \R.$$
Moreover, the maximal subgroup of $(\R^n,\star_G)$  having  $e$ as identity element is isomorphic to the group  $G\times \R.$

$(5)$ We have that $$Is_m(M^n_+) \simeq Aut(G).$$ 

\end{Exemp}
The properties $(1)-(5)$ follows easily from the results of this note. 
It sufficies to see that the space $\R^n$ can be identified to the space $Lip(G)$ of all real-valued Lipschitz map on $(G,disc)$. Indeed, the map

\begin{eqnarray*}
i: Lip(G)&\longrightarrow& \R^n\\
f &\mapsto& (f(g_1),...,f(g_n))
\end{eqnarray*} 
is a bijective map. Then, we observe that the operation $\star_G$ on $\R^n$ is just the operation $\oplus$ on $Lip(G)$. On the other hand, the subset $M^n_+$ is identified to $Lip^1_+(G)$ and $M^n$ is identified to $Lip^1(G)$. 
\bibliographystyle{amsplain}

\end{document}